\author[Candeloro]{\bf Domenico Candeloro}
\address{Department of Mathematics and Computer Sciences \\ University of Perugia\\
Via Vanvitelli, 1 - 06123 Perugia (Italy), Orcid Id:  0000-0003-0526-5334}
\email{candelor@dmi.unipg.it}
\author[Sambucini]{\bf Anna  Rita Sambucini}
\address{\rm Department of Mathematics and Computer Sciences \\ University of Perugia\\ Via Vanvitelli, 1 - 06123 Perugia (Italy) , Orcid Id: 0000-0003-0161-8729} \email{anna.sambucini@unipg.it}
\author[Trastulli]{\bf Luca Trastulli}
\address{\rm Department of Mathematics and Computer Sciences \\ University of Perugia\\ Via Vanvitelli, 1 - 06123 Perugia (Italy) , Orcid Id:  0000-0002-7722-4008} 
\email{luca.trastulli@gmail.com}
\subjclass[2010]{28B20, 28B05,28B05, 26E25, 46B20, 54C60.}
\keywords{Pettis multivalued integral,  martingale, Girsanov Theorem.}
\title{A Girsanov result for the Pettis  integral}
 \newtheorem{theorem}{Theorem}[section]
  \newtheorem{definition}[theorem]{Definition}
  \newtheorem{proposition}[theorem]{Proposition}
 \newtheorem{rem}[theorem]{Remark}
\newcommand{\spaces}{$\left(\Omega, \mathcal{A},\mathbb{P},\mathcal{F}\right)$ }
\newcommand{\dint}{\displaystyle{\int}}
\begin{document}
\maketitle
\begin{abstract}
A kind of Pettis integral representation for a Banach valued  Itô process is given and its drift term is modified using a Girsanov Theorem. 
\end{abstract}

\section{Introduction}
A very important tool in    measure theory and in  mathematical finance  is the Girsanov theorem, strictly linked to the well-known Wiener stochastic process called the standard
 Brownian motion $(w_t)_{t\in[0,\infty)}$, defined on a probability space $\left(\Omega, \mathcal{A}, \mathbb{P}\right)$  (a classic formulation of this result in the scalar case
  can be found for example in \cite{mikosh}, while for extension to vector lattices see \cite{GL}). This theorem allows to change the  probability measure $\mathbb{P}$, through the definition of a Radon-Nikod\'ym derivative,
   in order to obtain an 
equivalent measure $\mathbb{Q}$ such that, if $w_t$ is the standard Brownian motion on the probability space $\left(\Omega, \mathcal{A}, \mathbb{P}\right)$ (and then it 
results to be a martingale in itself under $\mathbb{P}$),   its transform $\tilde{w}_t$ is still a Brownian motion on the probability space $\left(\Omega, \mathcal{A}, \mathbb{Q}\right)$. 
The new measure $\mathbb{Q}$ is called an equivalent martingale measure  for $w_t$ with respect to $\mathbb{P}$.
The necessity of changing measure arises, for example, 
 in the Black-Scholes models (in this context they are called neutral risk measure).
So Girsanov Theorem describes how the dynamics of stochastic processes change when the original measure is changed to an equivalent probability measure.
 At the same time, for many applications, such as conditional measures we need to work with measures or random variables taking values in a suitable Banach space. 
In this paper we want to  generalize the Girsanov Theorem
in a more abstract contest.

This research could be motivated, for example, by  the study of  
a Brownian motion $w_t$, conditioned by the future $w_T$. 
We follow the idea formulated in \cite{mikosh} for the real Brownian motion, that defines the Radon-Nikod\'ym derivative using the density functions of the processes $w_t$ and of its transform $\tilde{w}_t:=w_{t}+\dint_0^t r(s)ds$, for a suitable scalar function $r$  that links the drift and diffusion terms.
Then it follows that, under the new measure $\mathbb{Q}$, the transformed process $\tilde{w}_t$ is a Brownian motion and a martingale in itself.

In Banach spaces were introduced and studied 
different types of integrals that generalize the Bochner one. We want to point out that this topic is  interesting also from the point of view of measure and integration theory, 
as showed in the papers \cite{mar1,candeloro0,candeloro1,BS1,
ckr,ckr1,LM1,dm,dma,
Fremlin0,musial,pettis,T84,rodriguez1,dp,bc,brow,anca,mu,mu2011,dps1,dm-new}.\\
In a previous research we have obtained a Girsanov result for the Birkhoff  integral  of a vector-valued function \cite{girsanov,cst1} and a Radon Nikodym result in \cite{candrn}.  
In this paper we want to weaken the hypothesis of integrability and examine the case of the Pettis integrability  when the space $X$ is a  Banach space not necessarily separable, 
the use of non-absolute integrals is also motivated by applications, as shown in \cite{cc, dps1,dmas}. 

The organization of the paper is as follows:  in Section \ref{defin}  we will introduce the Pettis stochastic integral and we recall some results, 
 while  in Section \ref{main}  a  Girsanov result (Theorem \ref{coroll Girsanov Pettis}) for vector measures is obtained.
Finally  a particular case is investigated. 

\section{Definitions}\label{defin}
We recall some definitions. Let $I\subset \mathbb{R}$ be an interval of the real line,\ $\left(\Omega, \mathcal{A}\right)$ a measurable space, 
 $\nu: \mathcal{A} \to \mathbb{R}_0^+$ be a scalar measure. With the symbol $\mathbb{P}$
($\mathbb{P}: \mathcal{A} \to [0,1]$)  we denote a probability measure.
\begin{definition}\rm
Given two measures $\mathbb{P}, \mathbb{Q}$ on a measurable space $\left(\Omega, \mathcal{A}\right)$, we say that $\mathbb{Q}$ is {\em absolutely continuous}
 on $\mathcal{A}$ with respect to $\mathbb{P}$,  ($\mathbb{Q}\ll\mathbb{P}$), if for every $A\in \mathcal{A}$ such that $\mathbb{P}(A)=0$, we have
  that $\mathbb{Q}(A)=0$. Two measures that are absolutely continuous one respect to the other are said to be two {\em equivalent measures}.
 \end{definition} 
Let $(X,\left\|\cdot \right\|)$ be a Banach space, with  the topology induced by its norm, $X^{\ast}$ its dual  and $\mathcal{B}(X)$ be the $\sigma$-algebra
 of Borel on $X$.
\\

A {\em filtration} $\mathcal{F}=\left(\mathcal{F}_t\right)_{t\in I}$ on a probability space $\left(\Omega, \mathcal{A},\mathbb{P}\right)$ is an increasing 
family of sub-$\sigma$-algebras of $\mathcal{A}$.
 $\left(\Omega, \mathcal{A},\mathbb{P}\right)$ provided with a filtration $\mathcal{F}_t$ is said to be
 a {\em filtered space}.
\begin{definition}\rm
We say that a collection of random variables $\left(Z_t\right)_{t\in I}$ is an {\em X-valued stochastic process} if the function $Z:(I\times \Omega,\mathcal{B}(I)\otimes \mathcal{A})\rightarrow (X,\mathcal{B}(X))$ is a measurable function.
\end{definition}
If $Z_t$ is an $X$-valued stochastic process, then 
\begin{itemize}
\item for every $t\in I$, $Z_t$ is a random variable that takes values in $X$, while
\item  the function $Z(\omega): I\rightarrow X$ for every $\omega\in \Omega$ is called a {\em trajectory} of the process $Z$;
\item $Z$ is {\em continuous a.s} if for almost all $\omega\in \Omega$ the trajectory $Z(\omega,\cdot)$ is a continuous function from $I$ to $X$.
\end{itemize}

\begin{definition} \rm 
Given a stochastic process $Z = (Z_t)_{t\in [0,T]}$ , 
\begin{itemize}
\item the {\em natural filtration}
for $Z$ is defined for every $t$ as the $\sigma$-algebra described by the stochastic process at all the previous times, namely 
$\tilde{\mathcal{F}_t}=\sigma\left\{ Z_s,  \quad s\in [0,t]\right\};$
\item 
 $Z$ is {\em adapted} to a filtration $\left(\mathcal{F}_t\right)_t$ if $Z_t$ is $\mathcal{F}_t$ measurable for every $t\in [0,T]$.
Obviously, every process is adapted to its natural filtration.
\item
 $Z$,   it is {\em progressively measurable} with respect to the filtration $\mathcal{F}_t$, if for every $t\in I$ we have that $Z|_{[0,t]\times \Omega}$ is $\mathcal{B}([0,t])\times \mathcal{F}_t$ measurable.
\end{itemize}
\end{definition}
We use capital letters as $Z_t$ to refer to $X$-valued stochastic processes, while we use lower case letters as $z_t$ to refer to real stochastic processes.
\begin{definition}\label{scalare-mart} \rm
 We say that $z_t$ is a {\em martingale} with respect to the filtration $\left(\mathcal{F}_t\right)_t$ if $z_t$ is $\mathcal{F}_t$ adapted and $\mathbb{E}(z_t|\mathcal{F}_s)=z_s$, 
 for all $t,s\in I$, $s\leq t$. A process that is a martingale with respect to its natural filtration is said to be a {\em martingale in itself.}
 \end{definition}
 It is important to notice that in the definition of martingale the underlying measure has a key role in the expected value, so a process could be or not a martingale depending on the measure $\mathbb{P}$ that we consider. 
If the process $z_t$ is a martingale (under $\mathbb{P}$), then the measure $\mathbb{P}$ is called a {\em martingale measure}.\\
 An equivalent martingale measure is also called a risk-neutral measure and it is used for example  in  financial market in order to obtain  an arbitrage-free price for each asset in the market.\\
  
Sometimes, it is useful to work with stochastic processes that satisfy a local version of  the martingale  property. 
\begin{definition} \rm 
A real stochastic process $z_t$, $t\in [0,T]$ defined on a filtered probability space $\left(\Omega, \mathcal{A},\mathbb{P},\mathcal{F}\right)$ is a {\em local martingale} with
 respect to $\mathcal{F}_t$ if there exists an increasing sequence $(\tau_n)_n$ of $\mathcal{F}_t$-stopping times, such that $\displaystyle\lim_{n \rightarrow\infty}\tau_n=T$ 
 and for every $n\in \mathbb{N}$, the process $z_{t\wedge \tau_n}$ is a $\mathcal{F}$-martingale.
\end{definition}
Among real stochastic processes we consider
\begin{definition} \rm 
 $(w_t)_{t\in [0,T]}$ is a real Brownian motion if it satisfies these properties:
\begin{itemize}
\item $w_0=0$ a.s. ;
\item $w_t$ is $\mathcal{F}_t$ adapted and continuous a.s. ;
\item $w_t$ has independent increments i.e. for very $s,t\in [0,T]$ such that $s\leq t$ we have that the increment $w_t-w_s$ is independent by $\mathcal{F}_s$ and it follows a normal distribution of parameters $\mathcal{N}\left(0,t-s\right)$.
\end{itemize}
\end{definition} 
\begin{definition}
Given $p\geq1$, a stochastic process $z_t$ belongs to the class $\mathbb{L}^p(\mathcal{F})$ if  $z_t$ is a progressively measurable process with respect to the filtration $\mathcal{F}_t$ and
$\displaystyle\int_0^T\mathbb{E}(|z_t|^p)dt<\infty$.
\end{definition}

 \begin{definition}\label{exp_Martingale}Given the standard Brownian motion $\left(w_t\right)_{t\in [0,T]}$ on the filtered space of probability \spaces , where the filtration is the natural one of the Brownian motion and a process $\theta_t \in \mathbb{L}^2(\mathcal{F})$, we define the {\textit{exponential martingale}} relative to $\theta$ as 

\begin{equation}y_t(\theta)=\exp\left\lbrace -\dfrac{1}{2}\int_0^t\theta_s^2ds-\int_0^t\theta_sdw_s\right\rbrace.
\label{1.1}
\end{equation}
\end{definition}

From now on we denote with $\left(w_t\right)_{t\in [0,T]}$ the standard real Brownian motion on the probability space $\left(\Omega, \mathcal{A},\mathbb{P}\right),$ and, unless otherwise specified, we denote with $\mathcal{F}_t$ its natural filtration. We recall that, by independent increments property, the Brownian motion $w_t$ is a continuous, Gaussian
 real process and has a normal distribution of parameters $\mathcal{N}\left(0,t\right)$, it results to be a  martingale in itself and it is progressively measurable with respect to its
  natural filtration.
  \\
  For what is unexplained relatively to stochastic processes we refer to \cite{mikosh,bc,neerven,novikov,RvG,KLW}.
\begin{definition} \rm
We say that $\Phi:\Omega\rightarrow X$ is  {\em Pettis integrable} with respect to $\nu$  if $\Phi$ is weakly measurable and for every set $D\in \mathcal{A}$ there exists an element $x_D\in X$, such that
\[x^{\ast}(x_D)=\int_Dx^{\ast}(\Phi)d\nu.\]
We say that $x_D$ is the {\em Pettis integral} of the function $\Phi$ over $D$ and we write
$x_D:=(Pe)\dint_D\Phi d\nu$ and we use the symbol $Pe(\Omega, \nu)$ to denote the class of Pettis integrable functions $\Phi$ with respect to $\nu$.
\end{definition}
For more details on the Pettis integral and its properties  we refer to \cite{ckr,ckr1, pettis,mu,musial,BS1
}.
 Together 
 to the Pettis integral of a vector function $\Phi$ with respect to a scalar measure $\nu$ we also consider the integral of a scalar function $\phi$ with respect to a vector measure
$N$ in the sense of Bartle-Dunford-Schwartz, namely
\begin{definition} \rm
We say that a measurable $\phi:\Omega\rightarrow \mathbb{R}$ is  {\em Bartle-Dunford-Schwartz integrable}
  with respect to  $N: \mathcal{A} \to X$ if for every  $E\in\mathcal{A}$ there exists $M(E) \in X$ such that
\[ (x^*, M(E) ) = \int_E \phi \, d(x^*,N), \qquad \forall \, x^* \in X^*.\]
Then 
$
(BDS)\dint_E \phi\,dN:=M(E) 
$ and we denote by $BDS (\Omega, N)$ the space of all Bartle-Dunford-Schwartz integrable functions. 
\end{definition}
For properties and details of this type of integrations consider for example \cite{stef,FMN,mmodena}.
In particular it is known that 
\begin{proposition}\label{stefansson} {\rm (\cite[Proposition 8]{stef})}
Given $N(\cdot) := (Pe)\dint_{\hskip-.2cm\cdot} \, \Phi d\nu$, 
a measurable scalar function $\phi \in BDS(\Omega, N)$ if and only if $\phi \Phi \in Pe(\Omega, \nu)$ and 
\[ \int \phi\,  dN = \int \phi \, \Phi \, d\nu.\]
\end{proposition}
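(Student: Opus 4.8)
The plan is to collapse both conditions onto a single scalar identity obtained by reading $x^{\ast}(\Phi)$ as a Radon--Nikod\'ym density. First I would unwind the hypothesis $N(\cdot) = (Pe)\int_{\cdot} \Phi \, d\nu$: by the definition of the Pettis integral this means that, for every $x^{\ast} \in X^{\ast}$ and every $D \in \mathcal{A}$,
\[
(x^{\ast}, N(D)) = \int_D x^{\ast}(\Phi)\, d\nu .
\]
Equivalently, the real (signed) measure $(x^{\ast}, N)(\cdot)$ has density $x^{\ast}(\Phi)$ with respect to $\nu$, that is $d(x^{\ast}, N) = x^{\ast}(\Phi)\, d\nu$, and consequently $d\,|(x^{\ast}, N)| = |x^{\ast}(\Phi)|\, d\nu$ for the total variation.

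The core step is then the classical density (change of variables) formula for scalar integration against a measure with a known Radon--Nikod\'ym derivative: for each fixed $x^{\ast}$ and each $E \in \mathcal{A}$,
\[
\int_E \phi \, d(x^{\ast}, N) = \int_E \phi\, x^{\ast}(\Phi)\, d\nu = \int_E x^{\ast}(\phi\Phi)\, d\nu .
\]
The total-variation identity above guarantees that the two relevant integrability requirements coincide: $\phi$ is integrable with respect to $(x^{\ast}, N)$ if and only if $\phi\, x^{\ast}(\Phi)$ is $\nu$-integrable. I would also note in passing that $\phi\Phi$ is weakly measurable, since $x^{\ast}(\phi\Phi) = \phi\cdot x^{\ast}(\Phi)$ is a product of measurable scalar functions whenever $\Phi$ is weakly measurable.

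It then suffices to compare the two definitions through this identity. If $\phi \in BDS(\Omega, N)$, then for each $E$ there is an element $M(E) \in X$ with $(x^{\ast}, M(E)) = \int_E \phi\, d(x^{\ast}, N) = \int_E x^{\ast}(\phi\Phi)\, d\nu$ for every $x^{\ast}$; this is precisely the assertion that $\phi\Phi \in Pe(\Omega, \nu)$ with $(Pe)\int_E \phi\Phi\, d\nu = M(E)$. Conversely, if $\phi\Phi \in Pe(\Omega, \nu)$, the same chain of equalities shows that $y_E := (Pe)\int_E \phi\Phi\, d\nu$ represents $\int_E \phi\, d(x^{\ast}, N)$, so $\phi \in BDS(\Omega, N)$ with $M(E) = y_E$. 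Because $X^{\ast}$ separates the points of $X$, the representing vector is unique in each case, so the two integrals coincide; taking $E = \Omega$ delivers the announced equality $\int \phi \, dN = \int \phi\Phi\, d\nu$.

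I expect the only delicate point to be the rigorous justification of the change-of-variables formula together with the matching of the scalar integrability domains, which rests on the total-variation relation $d\,|(x^{\ast}, N)| = |x^{\ast}(\Phi)|\, d\nu$; everything else is a direct transcription of the Pettis and Bartle--Dunford--Schwartz definitions.
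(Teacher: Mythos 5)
Your argument is correct and complete relative to the definitions the paper uses: reading $x^{\ast}(\Phi)$ as the Radon--Nikod\'ym density of $(x^{\ast},N)$ with respect to $\nu$, invoking the change-of-variables formula together with $d\,|(x^{\ast},N)| = |x^{\ast}(\Phi)|\,d\nu$ to match the scalar integrability conditions, and then observing that the two definitions ask for the very same representing vector is exactly the right mechanism, with uniqueness supplied by the separating dual. Note that the paper itself gives no proof of this statement --- it is quoted verbatim from Stefansson's Proposition 8 --- so there is nothing to diverge from; your proof is essentially the standard argument behind that citation.
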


\begin{definition} \rm 
Let $\Phi\in Pe\left(\Omega,\nu\right)$ and $\mathcal{F} \subset \mathcal{A}$ be a $\sigma$-algebra. We define, provided that it exists, the \emph{Conditional expectation of $\Phi$
 with respect to $\mathcal{F}$}, indicated by $\mathbb{E}\left(\Phi|\mathcal{F}\right)$, as the weakly  $\mathcal{F}$-measurable function $\Psi$ such that  $\Psi\in Pe(\Omega,\nu)$
  and for every $E\in \mathcal{F}$ it holds 
$$(Pe)\int_E \Phi d\nu=(Pe)\int_E\Psi d\nu.$$
\end{definition}
From this definition the classical tower property follows together with:
\begin{proposition}\label{tre}
Let $\Phi:\Omega\rightarrow X$ be a vector valued function and $\mathcal{F}\subset \mathcal{A}$ such that  there exists the conditional expectation 
$\mathbb{E}\left(\Phi|\mathcal{F}\right).$ Then, given a scalar $\mathcal{F}$ measurable function $\phi:\Omega\rightarrow \mathbb{R}$ 
so that the product function $\Phi(\cdot)\phi(\cdot)$ is Pettis integrable with respect to $\nu$, it is:
$$\mathbb{E}\left(\Phi(\omega)\phi(\omega)|\mathcal{F}\right)=\phi(\omega)\mathbb{E}\left(\Phi|\mathcal{F}\right).$$
\end{proposition}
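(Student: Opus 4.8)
The plan is to show that the candidate $\phi\,\mathbb{E}(\Phi|\mathcal{F})$ meets the three requirements in the definition of $\mathbb{E}(\Phi\phi|\mathcal{F})$ --- weak $\mathcal{F}$-measurability, membership in $Pe(\Omega,\nu)$, and agreement of the integrals over every $E\in\mathcal{F}$ --- so that the uniqueness built into that definition forces it to coincide with $\mathbb{E}(\Phi\phi|\mathcal{F})$. Write $\Psi:=\mathbb{E}(\Phi|\mathcal{F})$. The first thing I would record is the scalar reduction: for every $x^{\ast}\in X^{\ast}$ one has $x^{\ast}(\Psi)=\mathbb{E}\big(x^{\ast}(\Phi)\,\big|\,\mathcal{F}\big)$ in the ordinary real sense. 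This is immediate from the defining identity of $\Psi$, since for $E\in\mathcal{F}$
\[\int_E x^{\ast}(\Psi)\,d\nu=x^{\ast}\Big((Pe)\int_E\Psi\,d\nu\Big)=x^{\ast}\Big((Pe)\int_E\Phi\,d\nu\Big)=\int_E x^{\ast}(\Phi)\,d\nu,\]
and $x^{\ast}(\Psi)$ is $\mathcal{F}$-measurable because $\Psi$ is weakly $\mathcal{F}$-measurable.

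The core of the argument is then to transfer the classical real-valued ``taking out what is $\mathcal{F}$-measurable'' property through the functionals $x^{\ast}$. Since $\phi$ is $\mathcal{F}$-measurable, $x^{\ast}(\Phi)\in L^1(\nu)$, and $x^{\ast}(\Phi\phi)=\phi\,x^{\ast}(\Phi)\in L^1(\nu)$ (the latter precisely because $\Phi\phi\in Pe(\Omega,\nu)$), the scalar pull-out property gives
\[x^{\ast}(\phi\Psi)=\phi\,x^{\ast}(\Psi)=\phi\,\mathbb{E}\big(x^{\ast}(\Phi)\,\big|\,\mathcal{F}\big)=\mathbb{E}\big(x^{\ast}(\Phi\phi)\,\big|\,\mathcal{F}\big)\]
for every $x^{\ast}$. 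From the right-hand side $x^{\ast}(\phi\Psi)$ is $\mathcal{F}$-measurable, so $\phi\Psi$ is weakly $\mathcal{F}$-measurable; and for each $E\in\mathcal{F}$, integrating gives $\int_E x^{\ast}(\phi\Psi)\,d\nu=\int_E x^{\ast}(\Phi\phi)\,d\nu=x^{\ast}\big((Pe)\int_E\Phi\phi\,d\nu\big)$. As this holds for all $x^{\ast}$, the scalar integral of $x^{\ast}(\phi\Psi)$ over $E$ is represented by the vector $(Pe)\int_E\Phi\phi\,d\nu\in X$, so $\phi\Psi$ is Pettis integrable over every $E\in\mathcal{F}$ and its integral there equals that of $\Phi\phi$ --- exactly the required agreement of conditional integrals.

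The delicate point, which I expect to be the main obstacle, is to promote this to genuine Pettis integrability of $\phi\Psi$ on the whole of $\mathcal{A}$, since the definition of $\mathbb{E}(\Phi\phi|\mathcal{F})$ demands $\phi\Psi\in Pe(\Omega,\nu)$ and not merely integrability on $\mathcal{F}$-measurable sets. I would handle this through Proposition \ref{stefansson}: with $N(\cdot):=(Pe)\int_{\cdot}\Psi\,d\nu$, well defined because $\Psi\in Pe(\Omega,\nu)$, the condition $\phi\Psi\in Pe(\Omega,\nu)$ is equivalent to the Bartle--Dunford--Schwartz integrability $\phi\in BDS(\Omega,N)$, while by the same proposition the hypothesis $\Phi\phi\in Pe(\Omega,\nu)$ is equivalent to $\phi\in BDS(\Omega,M)$ for $M(\cdot):=(Pe)\int_{\cdot}\Phi\,d\nu$. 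The work then consists in passing from integrability against $M$ to integrability against $N$, exploiting that $M$ and $N$ agree on $\mathcal{F}$ and that $\phi$ is $\mathcal{F}$-measurable; concretely, for $D\in\mathcal{A}$ one rewrites $\int_D\phi\,d(x^{\ast},N)=\int_{\Omega}\mathbb{E}(1_D|\mathcal{F})\,x^{\ast}(\Phi\phi)\,d\nu$ and must represent this functional of $x^{\ast}$ by a single vector of $X$, which is where the non-absolute character of the Pettis integral makes the step genuinely nontrivial. Once $\phi\Psi\in Pe(\Omega,\nu)$ is secured, all three defining properties hold and the identity $\mathbb{E}(\Phi\phi|\mathcal{F})=\phi\,\mathbb{E}(\Phi|\mathcal{F})$ follows.
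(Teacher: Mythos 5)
Your overall strategy is the right one and is, in substance, what the paper intends (its own ``proof'' is only a pointer to \cite[Theorem 2.13]{cst1} together with Proposition \ref{stefansson}): scalarize through $x^{\ast}$, use the classical pull-out property of real conditional expectations to get $x^{\ast}(\phi\Psi)=\mathbb{E}\left(x^{\ast}(\Phi\phi)\,|\,\mathcal{F}\right)$, and from this read off the weak $\mathcal{F}$-measurability of $\phi\Psi$ and the agreement of the integrals over every $E\in\mathcal{F}$. Up to that point everything you write is correct, and it already gives Pettis integrability of $\phi\Psi$ \emph{relative to the sub-$\sigma$-algebra} $\mathcal{F}$.

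The problem is that you stop at exactly the step that carries the real content. You correctly isolate the remaining obligation --- that $\phi\,\mathbb{E}(\Phi|\mathcal{F})$ belongs to $Pe(\Omega,\nu)$, i.e.\ is Pettis integrable over all of $\mathcal{A}$ and not only over $\mathcal{F}$ --- you declare it ``genuinely nontrivial'', and then write ``once $\phi\Psi\in Pe(\Omega,\nu)$ is secured\ldots''. That is an assumption of the needed conclusion, not a proof, so as written the argument has a hole at its crux. The gap is closable along the very line you set up: for $D\in\mathcal{A}$, since $x^{\ast}(\phi\Psi)=\mathbb{E}\left(x^{\ast}(\Phi\phi)|\mathcal{F}\right)$ is $\mathcal{F}$-measurable, the self-adjointness of the scalar conditional expectation gives
\[
\int_D x^{\ast}(\phi\Psi)\,d\nu=\int_{\Omega}\mathbb{E}\left(1_D|\mathcal{F}\right)\,x^{\ast}(\Phi\phi)\,d\nu ,
\]
and the right-hand side is represented by the single vector $(Pe)\dint_{\Omega}\mathbb{E}\left(1_D|\mathcal{F}\right)\Phi\phi\, d\nu$, which exists because $\mathbb{E}\left(1_D|\mathcal{F}\right)$ is a bounded measurable function and bounded measurable multipliers preserve Pettis integrability (a standard fact, see e.g.\ \cite{mu}); this is precisely where the hypothesis $\Phi\phi\in Pe(\Omega,\nu)$ gets used. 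With that lemma quoted, your proof is complete; the detour through comparing $BDS(\Omega,M)$ with $BDS(\Omega,N)$ is not needed for this step, although Proposition \ref{stefansson} is indeed the translation device the paper itself invokes.
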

\begin{proof}
 Taking into account Proposition \ref{stefansson},
the proof is analogous to that of  \cite[Theorem 2.13]{cst1}
 where the Birkhoff integrability of first type of $\Phi(\cdot)\phi(\cdot)$ is substituted by its Pettis integrability and $\phi$ is  Bartle-Dunford-Schwartz integrable.
\end{proof}

\section{A Girsanov result}\label{main}

Now we define the stochastic integral of $\Phi$ with respect to a Brownian motion $(w_t)_t$ as follows.

\begin{definition}\label{stochastic pettis integral} \rm
Let $\Phi:[0,T]\rightarrow X$ be a vector function and $w_t$ the standard Brownian motion on the filtered space \spaces . Suppose that the function
 $\Phi$ is  weakly 
measurable and the function $x^{\ast}\left(\Phi\right)$ belongs to $\mathbb{L}^2(\mathcal{F})$, for every $x^{\ast}\in X^{\ast}$. If for every 
$B\in\mathcal{B}\left([0,T]\right)$ there exists an $X$-valued random variable $Y_B:\Omega\rightarrow X$ such that 
\begin{equation}\label{stoch integral}
x^{\ast}(Y_B)=\int_Bx^{\ast}(\Phi(s))dw_s
\end{equation}
then we say that $\Phi$ is stochastically integrable with respect to $w_t$ and we write
$$Y_B=(Pe)\int_B\Phi(s)dw_s$$
\end{definition}
We refer to this integral as a Stochastic Pettis integral, with respect to the Brownian motion. 
This integral is defined as a Pettis integral, i.e. in a weak sense but is a stochastic integral. \\


Moreover
\begin{theorem}\label{martingala pettis}
Let $\Phi:[0,T]\rightarrow X$ be a vector function, stochastically integrable with respect to $w_t$, the standard Brownian motion as
 in (\ref{stoch integral}). Then the process defined by
$$ \left((Pe)\int_0^t\Phi(s)dw_s\right)_t$$
is an $\left(\mathcal{F}_t\right)_t$- martingale. 
\end{theorem}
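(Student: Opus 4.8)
The plan is to reduce the vector-valued martingale property to the classical scalar Itô martingale property by scalarizing, and then to lift the conclusion back to $X$ using the fact that $X^*$ separates the points of $X$. Write $Y_t := (Pe)\int_0^t\Phi(s)dw_s$; by Definition \ref{stochastic pettis integral} this is the $X$-valued random variable characterized by $x^{\ast}(Y_t)=\int_0^t x^{\ast}(\Phi(s))dw_s$ for every $x^{\ast}\in X^{\ast}$. Interpreting the statement through the conditional expectation introduced earlier, ``$(Y_t)_t$ is an $(\mathcal{F}_t)_t$-martingale'' should mean that $Y_t$ is weakly $\mathcal{F}_t$-measurable and that $\mathbb{E}(Y_t|\mathcal{F}_s)=Y_s$ for all $s\leq t$. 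Adaptedness is the easy half: since $x^{\ast}(\Phi)\in\mathbb{L}^2(\mathcal{F})$ for every $x^{\ast}$ by the standing hypothesis, the scalar Itô integral $\int_0^t x^{\ast}(\Phi(s))dw_s$ is $\mathcal{F}_t$-measurable, so $x^{\ast}(Y_t)$ is $\mathcal{F}_t$-measurable for every $x^{\ast}$, which is exactly weak $\mathcal{F}_t$-measurability of $Y_t$.

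The core step I would carry out is to verify that $Y_s$ itself realizes the conditional expectation $\mathbb{E}(Y_t|\mathcal{F}_s)$. By the adaptedness just noted, $Y_s$ is weakly $\mathcal{F}_s$-measurable, so it remains to check the defining integral identity of the conditional expectation, namely $(Pe)\int_E Y_t\,d\mathbb{P}=(Pe)\int_E Y_s\,d\mathbb{P}$ for every $E\in\mathcal{F}_s$. Testing against an arbitrary $x^{\ast}\in X^{\ast}$ and unravelling the definition of the Pettis integral, this reduces to the scalar equality
$$\int_E x^{\ast}(Y_t)\,d\mathbb{P}=\int_E x^{\ast}(Y_s)\,d\mathbb{P},\qquad E\in\mathcal{F}_s.$$
For each fixed $x^{\ast}$ the process $x^{\ast}(Y_t)=\int_0^t x^{\ast}(\Phi(s))dw_s$ is a genuine Itô integral of an $\mathbb{L}^2(\mathcal{F})$ integrand, hence a scalar martingale by the classical theory, so that $\mathbb{E}(x^{\ast}(Y_t)|\mathcal{F}_s)=x^{\ast}(Y_s)$; integrating this over $E\in\mathcal{F}_s$ gives the displayed equality. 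Since $x^{\ast}$ is arbitrary and $X^{\ast}$ separates the points of $X$, the two $X$-valued integrals coincide, and therefore $\mathbb{E}(Y_t|\mathcal{F}_s)=Y_s$, which is the martingale property.

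The hard part will not be the scalar martingale identity, which is classical, but the careful treatment of the vector-valued conditional expectation in the merely weak (Pettis) setting. One must first confirm that $Y_t\in Pe(\Omega,\mathbb{P})$, so that $\mathbb{E}(Y_t|\mathcal{F}_s)$ is legitimately defined; this is available because each $x^{\ast}(Y_t)$ lies in $L^2(\mathbb{P})\subset L^1(\mathbb{P})$. The delicate point is the commutation $x^{\ast}(\mathbb{E}(\cdot|\mathcal{F}_s))=\mathbb{E}(x^{\ast}(\cdot)|\mathcal{F}_s)$, which I would justify directly from the defining integral relation of the conditional expectation together with the characterization of the scalar conditional expectation, rather than by any Bochner-type pointwise argument; it is precisely here that the weak nature of the integral and the adaptedness of the candidate $Y_s$ must be invoked to ensure that the object produced is again Pettis integrable and weakly $\mathcal{F}_s$-measurable.
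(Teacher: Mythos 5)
Your proposal is correct and follows essentially the same route as the paper: scalarize with an arbitrary $x^{\ast}\in X^{\ast}$, invoke the classical martingale property of the scalar It\^{o} integral $\int_0^t x^{\ast}(\Phi(r))\,dw_r$ of an $\mathbb{L}^2(\mathcal{F})$ integrand, and conclude via the fact that $X^{\ast}$ separates the points of $X$. Your version is in fact slightly more careful than the paper's, since you unpack the defining integral identity of the vector conditional expectation and exhibit $Y_s$ as the candidate, where the paper simply interchanges $x^{\ast}$ with $\mathbb{E}(\cdot|\mathcal{F}_s)$ without comment.
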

\begin{proof}
Let $s\leq t$ in $[0,T]$ and fix $x^{\ast}\in X^{\ast}$. We have that
\begin{eqnarray*}
<x^{\ast},\mathbb{E}\left((Pe)\int_0^t\Phi(r)dw_r|\mathcal{F}_s\right)>&=&
\mathbb{E}\left(\int_0^t<x^{\ast},\Phi(r)>dw_r|\mathcal{F}_s\right)= \\&=&
\mathbb{E}\left(\int_0^s<x^{\ast},\Phi(r)>dw_r\right)=\\
&=&<x^{\ast},(Pe)\int_0^s\Phi(r)dw_r>.
\end{eqnarray*}
Since  $(\dint_0^t<x^{\ast},\Phi(r)>dw_r)_t$  is $\left(\mathcal{F}_t\right)_t$-adapted and in  $\mathbb{L}^2(\mathcal{F})$, 
it is a  scalar martingale with respect to the natural filtration of the Brownian motion (Definition \ref{scalare-mart}).  
So, by arbitrariness of $x^{\ast}$, we have
$$\mathbb{E}\left((Pe)\int_0^t\Phi(r)dw_r|\mathcal{F}_s\right)=(Pe)\int_0^s\Phi(r)dw_r.$$
\end{proof}

The class of Itô processes is  very important and has lots of applications in mathematics. The idea is to make some kind of Pettis integral representation for a Banach valued process.
 In fact,  using the Girsanov Theorem, it is possible to change the drift term of an Itô integral and obtain a local martingale. We want to prove an analogous result for vector
  processes that have an integral representation in terms of Pettis integrals.
The main problem is to define a stochastic integral, that is an integral of a function with respect to a stochastic process. In the real case in fact we have
$$x_t=\int_o^ta_sds+\int_0^tb_sdw_s$$
and we want to extend the term $\dint_0^tb_sdw_s$ from the real case to the vector one.
Firstly, we could consider this kind of definition of stochastic integral for Banach valued functions.
\begin{definition} \rm
Let's consider a Banach valued  process $A_t:[0,T]\times \Omega \rightarrow X$ 
which admits a stochastic integral representation 
of the following  form (Itô-Pettis Process)
$$ A_t=(Pe)\int_0^t\Psi(s)ds+(Pe)\int_0^t\Phi(s)dw_s, \ \ \ \ \ \ \left(\text{under} \ \ \ \mathbb{P}\right).$$
on the probability filtered space \spaces , where $\Psi$ and $\Phi$ are two Pettis integrable functions, taking values in 
$X$ and $\Phi$ is Pettis-stochastic integrable with respect to the Brownian motion $w_t$.
The component $\Psi(t)$ is the {\em drift term } of the process $A_t$, while $\Phi(t)$ represents its {\em diffusion term}.
\end{definition}

The idea, following the Girsanov construction, is to change the drift term into the expression of the Itô process, using a change of probability measure. This can be obtained defining a new measure by a martingale positive process that generally takes the form of an exponential. This family of martingales takes the name of {\em exponential martingales}. 

\begin{theorem}\label{coroll Girsanov Pettis} {\rm ([a Girsanov result])}
If the following conditions are satisfied:
\begin{enumerate}
\item There exists a scalar function $r:[0,T]\rightarrow \mathbb{R}$ in $L^2([0,T])$ such that, for every $t\in [0,T]$ one has
$\Phi(t)=r(t)\Psi(t).$
\item The exponential process 
$y_t=\exp\left\lbrace-\dfrac{1}{2} \displaystyle{\int_0^T} r^2(s)ds- \displaystyle{\int_0^T} r(s)dw_s\right\rbrace$ is a martingale
 under $\mathbb{P}$ with respect to $\mathcal{F}$. 
\end{enumerate}
Then, defined 
$\tilde{w_t}=w_{t}+\displaystyle{\int_0^t} r(s)ds,$
we have that
$$A_t=(Pe)\int_0^t\Psi(s)d\tilde{w}_s, \qquad \left(\text{under} \ \ \mathbb{Q}\right).$$ 
where
$\dfrac{d\mathbb{Q}}{d\mathbb{P}}=y_T.$
Therefore, the process $A_t$ is a martingale under $\mathbb{Q}$ and then $\mathbb{Q}$ is an equivalent martingale measure with respect to $\mathbb{P}$.
\end{theorem}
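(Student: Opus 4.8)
The plan is to transport the classical scalar Girsanov theorem through the weak definition of the stochastic Pettis integral. First I would fix an arbitrary functional $x^\ast\in X^\ast$, reduce the vector identity to a one–dimensional statement about the real It\^o process $\langle x^\ast, A_t\rangle$, apply the scalar Girsanov theorem to it, and then reassemble the resulting scalar equalities into an $X$–valued one by appealing to Definition \ref{stochastic pettis integral}. A convenient feature of this route is that the $X$–valued random variable demanded by that definition need not be built from scratch: the natural candidate is $A_t$ itself, which exists by hypothesis, so the existence half of Pettis integrability is automatic and only the defining weak equality has to be verified.

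Next I would record that hypothesis (2) makes $\mathbb{Q}$, defined by $d\mathbb{Q}/d\mathbb{P}=y_T$, a genuine probability measure equivalent to $\mathbb{P}$: since $y_t$ is a positive $\mathbb{P}$–martingale with $\mathbb{E}_{\mathbb{P}}(y_T)=y_0=1$ and $y_T>0$ almost surely, one has $\mathbb{Q}\sim\mathbb{P}$, and the scalar Girsanov theorem applies, so that $\tilde{w}_t=w_t+\int_0^t r(s)\,ds$ is a standard Brownian motion under $\mathbb{Q}$. Fixing $x^\ast$ and using the weak definitions of the two Pettis integrals I would write $\langle x^\ast,A_t\rangle=\int_0^t\langle x^\ast,\Psi(s)\rangle\,ds+\int_0^t\langle x^\ast,\Phi(s)\rangle\,dw_s$, and then invoke the proportionality in hypothesis (1) together with the substitution $dw_s=d\tilde{w}_s-r(s)\,ds$ to absorb the Lebesgue (drift) term into the stochastic one. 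This is exactly the scalar Girsanov reduction of an It\^o process to a driftless integral, and it yields, $\mathbb{Q}$–almost surely,
\[
\langle x^\ast, A_t\rangle=\int_0^t \langle x^\ast,\Psi(s)\rangle\,d\tilde{w}_s .
\]
By the arbitrariness of $x^\ast$, Definition \ref{stochastic pettis integral} (with witness $Y_{[0,t]}=A_t$) then gives $A_t=(Pe)\int_0^t\Psi(s)\,d\tilde{w}_s$ under $\mathbb{Q}$.

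Finally, the martingale conclusion would be obtained by re-running Theorem \ref{martingala pettis} in the new setting: since $\tilde{w}$ is a standard Brownian motion under $\mathbb{Q}$ and $A_t$ is its stochastic Pettis integral of $\Psi$, the same functional-by-functional argument (each $\langle x^\ast,\Psi\rangle$ being adapted and square–integrable, hence its $\tilde{w}$–integral a scalar $\mathbb{Q}$–martingale) shows that $A_t$ is an $(\mathcal{F}_t)_t$–martingale under $\mathbb{Q}$, so that $\mathbb{Q}$ is an equivalent martingale measure. The step I expect to be most delicate is precisely this transfer of the hypotheses to the $(\mathbb{Q},\tilde{w})$ world: one must check that the scalar integrands $\langle x^\ast,\Psi\rangle$ still lie in $\mathbb{L}^2$ and remain progressively measurable for the filtration under the new measure, and that the weak coordinate-wise Girsanov identities genuinely glue into the asserted $X$–valued equality, i.e. that replacing $w$ by $\tilde{w}$ and $\mathbb{P}$ by $\mathbb{Q}$ is compatible with the Pettis conditional–expectation machinery (Proposition \ref{tre}) and with Theorem \ref{martingala pettis}. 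The change-of-measure Bayes rule $\mathbb{E}_{\mathbb{Q}}(\,\cdot\,|\mathcal{F}_s)=y_s^{-1}\,\mathbb{E}_{\mathbb{P}}(\,\cdot\,y_T\,|\mathcal{F}_s)$, applied after testing with $x^\ast$, is the tool that keeps the scalar reduction legitimate and is where most of the verification lies.
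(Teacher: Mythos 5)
Your proposal is correct and follows essentially the same route as the paper: apply the classical scalar Girsanov theorem to obtain $\mathbb{Q}$ and the fact that $\tilde{w}_t$ is a Brownian motion under it, test with an arbitrary $x^{\ast}\in X^{\ast}$ to reduce the identity to the scalar substitution $d\tilde{w}_s=dw_s+r(s)\,ds$, reassemble by arbitrariness of $x^{\ast}$, and invoke Theorem \ref{martingala pettis} for the martingale conclusion. Your added remarks on the witness $Y_{[0,t]}=A_t$ and the Bayes rule for conditional expectations are sensible refinements rather than a different method (and note that both your sketch and the paper's computation gloss over the same $\Phi$ versus $\Psi$ bookkeeping in hypothesis (1), which as stated reads $\Phi=r\Psi$ while the cancellation of the drift actually requires $\Psi=r\Phi$).
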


\begin{proof}
Since  $y_t$ is an exponential martingale (as in Definition \ref{exp_Martingale}), then thanks to
\cite[Theorem 10.5]{pascucci}, 
 the measure $\mathbb{Q} = \dint y_T d\mathbb{P} $ defined using $Y_{T}$ as Radon-Nikodym derivative, is an equivalent martingale measure with respect to $\mathbb{P}$
 and the process $\tilde{w_t}$ is a
 Brownian motion under the new probability  $\mathbb{Q}$. This allows us to define the stochastic Pettis integral
$$(Pe)\int_0^t\Psi(s)d\tilde{w}_s \qquad \left(\text{under} \ \ \mathbb{Q}\right).$$
Now we claim that
\begin{equation}\label{eq pettis}
(Pe)\int_0^t\Psi(s)d\tilde{w}_s=(Pe)\int_0^t\Psi(s)dw_s
 +
(Pe)\int_0^tr(s)\Psi(s)ds
\end{equation}
as a vector equivalence of Pettis stochastic integral and Pettis integral.
To prove this we consider, for every $x^{\ast}\in X^{\ast}$
\begin{eqnarray*}
<x^{\ast},\int_0^t\Psi(s)d\tilde{w}_s>&=&\int_0^t<x^{\ast},\Psi(s)>d\tilde{w}_s=\int_0^t<x^{\ast},\Psi(s)>\left(dw_s
+
r(s)ds\right)=\\
&=&\int_0^t<x^{\ast},\Psi(s)>dw_s
+
\int_0^t<x^{\ast},\Psi(s)>r(s)ds=\\
&=&<x^{\ast},\int_0^t\Psi(s)dw_s>
+
<x^{\ast},\int_0^t\Psi(s)r(s)ds>=\\
&=&<x^{\ast},\int_0^t\Psi(s)dw_s
+
\int_0^t\Psi(s)r(s)ds>,
\end{eqnarray*}
where all the vector integrals are Pettis stochastic integrals.
So by the arbitrariness of $x^{\ast}\in X^{\ast}$, the equation (\ref{eq pettis}) holds. Then, observing that $d\tilde{w}_t=dw_t+r(t)dt$, we easily deduce that
\begin{eqnarray*}
A_t \hskip-.2cm&=&\hskip-.2cm(Pe)\int_0^t\Psi(s)ds+(Pe)\int_0^t\Phi(s)dw_s
=(Pe)\int_0^t\Psi(s)ds+(Pe)\int_0^t\Phi(s)d\tilde{w}_s+\\
&-&(Pe)\int_0^tr(s)\Phi(s)ds=(Pe)\int_0^t\Phi(s)d\tilde{w}_s
\end{eqnarray*}
and then we have that, under the equivalent measure $\mathbb{Q}$
$$A_t=(Pe)\int_0^t\Psi(s)d\tilde{w}_s$$ and it turns out to be a martingale thanks to Theorem \ref{martingala pettis}.
\end{proof}

The conditioning of random variables to future time (past times) is very useful in some application in mathematical finance and for pricing formulas. 
We can see conditional measures as vector-valued measures and, using the Pettis integral, we can give an example of application of  Theorem \ref{coroll Girsanov Pettis}
 for conditional measures.
 However if we condition a Brownian motion on an expiration time $T>0$ fixed, the distribution of  this process changes and in general, it doesn't preserve some of its properties,  such as the martingale property. 

\begin{rem}\label{ConditionalDistribution} \rm  
 It's well know that the Brownian motion $w_t$, conditioned by the future $w_T$, has a conditioned density function, 
 that could be seen like a vector function from the real line to $L^1\left(\Omega\right)$,
  given by
$$ f_{t}(x|w_{T}) :=\dfrac{\sqrt{T}}{\sqrt{T-t}\sqrt{2\pi t}}\exp\left\lbrace-\frac{\left( x-\frac{w_{T}t}{T}\right)^{2}T}{2(T-t)t}\right\rbrace.$$
For details about conditional distributions for Gaussian and Wiener processes, we refer to \cite{ex10}.
In particular, we recall that, if $\left(z_{t}\right)_{t}$ is a Gaussian process, with parameters $\mathcal{N}\left(\mu_{t},\sigma^{2}_{t}\right)$, than, given $u\ge t$, we have that 
\begin{equation}\label{conditionalexpectation}
\mathbb{E}\left(z_{t}|z_{u}\right)=\mu_{t}+\rho\dfrac{\sigma_{t}}{\sigma_{u}}\left(z_{u}-\mu_{u}\right) 
\end{equation}
\end{rem}

Now we consider  the expected value of $w_t$ that under the conditioned measure, namely
\begin{proposition}
Given a measurable function $\phi: \Omega \to \mathbb{R}$ ,
if  $\phi \in  BDS(\Omega, N)$  then 
$$\int_A\phi(\omega)dN(\omega)=
\int_A \mathbb{E}\left(\phi(\omega)|w_T(\omega)\right)d\mathbb{P}(\omega)$$
where
\begin{eqnarray}\label{due}
N\left(A\right):=
\dint_A\mathbb{E}\left(\cdot |w_T(\omega)\right)d\mathbb{P}(\omega).
\end{eqnarray}
\end{proposition}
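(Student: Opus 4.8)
The plan is to read the defining formula for $N$ as a Pettis integral and then feed it into the two structural results already at hand, Proposition \ref{stefansson} and Proposition \ref{tre}. Concretely, I would set $\Phi(\omega) := \mathbb{E}(\cdot \mid w_T(\omega))$, regarded as the $X$-valued function of $\omega$ coming from the conditional object of Remark \ref{ConditionalDistribution}, so that by definition
$$N(A) = (Pe)\int_A \Phi(\omega)\, d\mathbb{P}(\omega), \qquad A \in \mathcal{A}.$$
The first task is to record that $\Phi$ is genuinely Pettis integrable with respect to $\mathbb{P}$, i.e. that it is weakly measurable and that $x^{\ast}(\Phi)\in L^1(\mathbb{P})$ for every $x^{\ast}\in X^{\ast}$; this is exactly what legitimizes viewing $N$ as the vector measure $(Pe)\dint_{\cdot}\Phi\,d\mathbb{P}$ and is the standing hypothesis under which Proposition \ref{stefansson} is formulated.

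Once $N$ is written in this form, I would apply Proposition \ref{stefansson} with $\nu=\mathbb{P}$. Since $\phi\in BDS(\Omega,N)$ by assumption, that proposition yields simultaneously that $\phi\Phi\in Pe(\Omega,\mathbb{P})$ and the identity
$$\int_A \phi \, dN = \int_A \phi(\omega)\,\Phi(\omega)\, d\mathbb{P}(\omega) = (Pe)\int_A \phi(\omega)\, \mathbb{E}(\cdot \mid w_T(\omega))\, d\mathbb{P}(\omega).$$
This reduces the whole statement to recognizing the integrand $\phi(\omega)\,\mathbb{E}(\cdot \mid w_T(\omega))$ as the conditional object $\mathbb{E}(\phi(\omega) \mid w_T(\omega))$ appearing on the right-hand side of the claim.

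For that final identification I would invoke the pull-out property of the conditional expectation, Proposition \ref{tre}, with $\mathcal{F}=\sigma(w_T)$. Having already secured the Pettis integrability of $\phi\Phi$, Proposition \ref{tre} gives the pointwise ($\mathbb{P}$-a.e.) equality $\phi(\omega)\,\mathbb{E}(\Phi\mid\mathcal{F}) = \mathbb{E}\big(\phi\,\Phi \mid \mathcal{F}\big)$, which is precisely $\phi(\omega)\,\mathbb{E}(\cdot\mid w_T(\omega)) = \mathbb{E}(\phi(\omega)\mid w_T(\omega))$. Substituting this into the last Pettis integral rewrites it as $\dint_A \mathbb{E}(\phi(\omega)\mid w_T(\omega))\, d\mathbb{P}(\omega)$, and chaining the two displays closes the argument. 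Everything else is a routine transcription through the dual pairing $x^{\ast}(\cdot)$ together with the scalar definition of conditional expectation.

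The step I expect to be the main obstacle is this last identification via Proposition \ref{tre}, because that proposition is stated for an $\mathcal{F}$-\emph{measurable} scalar factor, whereas $\phi$ is here only assumed $\mathcal{A}$-measurable. I would therefore be careful to apply the pull-out to the $X$-valued function $\Phi=\mathbb{E}(\cdot\mid w_T)$ itself — whose values already encode the conditioning on $w_T$ — rather than to $\phi$, and to verify that $\phi\Phi$ stays Pettis integrable and that the claimed pointwise equality genuinely holds $\mathbb{P}$-almost everywhere. Pinning down the precise $X$-valued meaning of both $\mathbb{E}(\cdot\mid w_T)$ and $\mathbb{E}(\phi\mid w_T)$, and checking the measurability needed to invoke Proposition \ref{tre} in this setting, is the delicate part; the remainder is bookkeeping.
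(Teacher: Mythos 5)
Your proposal follows the paper's proof essentially verbatim: the paper likewise sets $\Phi(\omega)=\mathbb{E}(\cdot\,|w_T(\omega))$, reads (\ref{due}) as $N(A)=\int_A\Phi\,d\mathbb{P}$, and applies Proposition \ref{stefansson} to obtain $\int_A\phi\,dN=\int_A\phi(\omega)\Phi(\omega)\,d\mathbb{P}(\omega)$. The only divergence is the final step, where the paper simply rewrites $\phi(\omega)\Phi(\omega)$ as $\mathbb{E}(\phi(\omega)|w_T(\omega))$ without appealing to Proposition \ref{tre}, so the measurability obstacle you flag for the pull-out lemma does not actually enter the paper's argument.
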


\begin{proof}
Recalling the conditional distribution of Gaussian processes and conditionated to a future time  (see for example \cite[Section 4.6]{ex10}   and formula (\ref{conditionalexpectation})), we have that $w_{t}|w_{T}$ follows a Gaussian distribution, as seen in Remark \ref{ConditionalDistribution}, with expected value, with respect the probability measure $\mathbb{P}$, given by
\begin{eqnarray}\label{uno}
\mathbb{E}\left(w_t\right|w_T)=\mathbb{E}^P \left(w_t\right|w_T)= \dfrac{t}{T}w_T.
\end{eqnarray}
Denoted by 
$\Phi(\omega):=\mathbb{E}\left(\cdot|w_T(\omega)\right)$, the measure $N:\mathcal{A}\rightarrow X$ defined in (\ref{due})
satisfies the following equality:
\begin{eqnarray*}
N\left(A\right)=\dint_{A}\Phi\left(\omega\right)d\mathbb{P}(\omega) =\dint_A\mathbb{E}\left(\cdot |w_T(\omega)\right)d\mathbb{P}(\omega)
\end{eqnarray*}
So, thanks to Proposition \ref{stefansson}, it is
$$\int_A\phi(\omega)dN(\omega)=\int_A \phi(\omega)\Phi(\omega) d\mathbb{P}(\omega)=\int_A \mathbb{E}\left(\phi(\omega)|w_T(\omega)\right)d\mathbb{P}(\omega).$$
\end{proof}

Using this result we are able to observe that
\begin{proposition}
Let $w_t$ be a Brownian motion and $N$ the vector measure defined in (\ref{due}) . Then for every $s < t$ it is
 $$\mathbb{E}^{N}\left(w_t|\mathcal{F}_s\right)= \dfrac{t}{T} \,w_s.$$
\end{proposition}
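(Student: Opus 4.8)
The plan is to characterise $\mathbb{E}^{N}(w_t|\mathcal{F}_s)$ as the ($\mathbb{P}$-a.s. unique) $\mathcal{F}_s$-measurable random variable $h$ for which
\[
\int_A h\, d\mathbb{P}=\int_A w_t\, dN\qquad\text{for every } A\in\mathcal{F}_s,
\]
and then to compute the right-hand side explicitly. First I would apply the preceding Proposition (which rests on Proposition \ref{stefansson}) to the scalar function $\phi=w_t$, turning the $N$-integral into a $w_T$-conditioned $\mathbb{P}$-integral:
\[
\int_A w_t\, dN=\int_A \mathbb{E}\left(w_t|w_T\right)\, d\mathbb{P},\qquad A\in\mathcal{A}.
\]

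The next step is to substitute the Gaussian conditioning formula for the Brownian motion, recalled in Remark \ref{ConditionalDistribution} and written out in (\ref{uno}), namely $\mathbb{E}(w_t|w_T)=\dfrac{t}{T}\,w_T$ (legitimate because $s<t\le T$), so that the integrand collapses to $\dfrac{t}{T}\,w_T$. Since $w$ is a martingale in itself under $\mathbb{P}$ with respect to its natural filtration, for $A\in\mathcal{F}_s$ one has $\int_A w_T\, d\mathbb{P}=\int_A \mathbb{E}(w_T|\mathcal{F}_s)\, d\mathbb{P}=\int_A w_s\, d\mathbb{P}$. Concatenating the equalities yields, for every $A\in\mathcal{F}_s$,
\[
\int_A w_t\, dN=\frac{t}{T}\int_A w_T\, d\mathbb{P}=\frac{t}{T}\int_A w_s\, d\mathbb{P}=\int_A \frac{t}{T}\,w_s\, d\mathbb{P}.
\]
As $\dfrac{t}{T}\,w_s$ is $\mathcal{F}_s$-measurable, the characterising identity above forces $\mathbb{E}^{N}(w_t|\mathcal{F}_s)=\dfrac{t}{T}\,w_s$, which is the claim.

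The purely computational steps (pulling the constant $t/T$ out of the integral and invoking integrability of $w_t,w_s,w_T$) are routine. The step requiring real care, and the one I expect to be the main obstacle, is the first reduction: one must verify that $w_t\in BDS(\Omega,N)$ so that the preceding Proposition applies, and, more importantly, check that the $X$-valued identity it provides is consistent with the scalar characterisation of the conditional expectation $\mathbb{E}^{N}(\cdot|\mathcal{F}_s)$ employed here. Once the passage from the $N$-integral to the $w_T$-conditioned $\mathbb{P}$-integral is secured, the proof reduces to the interplay of two successive conditionings, first on $w_T$, encoded in $N$, and then on $\mathcal{F}_s$, supplied by the martingale property of $w$, which is precisely what produces the coefficient $t/T$ while replacing $w_T$ by $w_s$.
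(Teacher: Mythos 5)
Your proposal is correct and follows essentially the same route as the paper: both convert $\int_A w_t\,dN$ into $\int_A \mathbb{E}^{\mathbb{P}}(w_t|w_T)\,d\mathbb{P}=\frac{t}{T}\int_A w_T\,d\mathbb{P}$ via the preceding proposition and formula (\ref{uno}), and then invoke the martingale property $\mathbb{E}^{\mathbb{P}}(w_T|\mathcal{F}_s)=w_s$ to conclude. The caveat you raise about which characterisation of $\mathbb{E}^{N}(\cdot|\mathcal{F}_s)$ is in force (matching $\mathbb{P}$-integrals of $h$ against $N$-integrals of $w_t$ over $A\in\mathcal{F}_s$) is exactly the convention the paper uses implicitly when it writes $\mathbb{E}^{N}(w_t|\mathcal{F}_s)=\mathbb{E}^{\mathbb{P}}(\mathbb{E}^{\mathbb{P}}(w_t|w_T)|\mathcal{F}_s)$, so no discrepancy arises.
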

\begin{proof}
Since $w_t$ is a  Brownian motion  then, for every $s<t$,
and for every $A \in \mathcal{F}_s$, thanks to (\ref{uno}) and (\ref{due})
\begin{eqnarray*}
\int_A w_t dN &=& \int_A w_t \dfrac{dN}{dP}  dP = \int_A w_t  \mathbb{E}^{\mathbb{P}}\left(\cdot|w_T(\omega)\right) dP=
\int_A  \mathbb{E}^{\mathbb{P}}\left( w_t|w_T(\omega)\right) dP= \\ &=& 
\int_A  \dfrac{t}{T} w_T\, dP
\end{eqnarray*}
and so
$$\mathbb{E}^{N}\left(w_t|\mathcal{F}_s\right)=\mathbb{E}^{\mathbb{P}}\left(\mathbb{E}^{\mathbb{P}}\left(w_t|w_T\right)|\mathcal{F}_s\right)=\mathbb{E}^{\mathbb{P}}\left(w_T\frac{t}{T}|\mathcal{F}_s\right)=\dfrac{t}{T}\, \mathbb{E}^{\mathbb{P}}\left(w_T|\mathcal{F}_s\right) = \dfrac{t}{T} \,w_s.$$
\end{proof}
We highlight that $w_t$ is not a martingale with respect to the vector measure $N$.
However, it holds:
\begin{proposition}
Given $(w_t)_{t\in[0,T]}$ be a Brownian motion on the probability filtered space \spaces, we have that $w_t$ is a martingale with respect to the vector measure 
$Q:\mathcal{A}\rightarrow X$  defined by:
$$Q(A):=\int_A\mathbb{E}^{\mathbb{P}}\left(\cdot \, | \dfrac{t}{T}w_T(\omega)\right)d\mathbb{P}(\omega).$$
\end{proposition}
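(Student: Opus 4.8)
The plan is to follow verbatim the template of the two preceding propositions, reducing the martingale identity for the vector measure $Q$ to a scalar computation under $\mathbb{P}$ by means of Proposition \ref{stefansson}. First I would record that, writing $\Phi_Q(\omega):=\mathbb{E}^{\mathbb{P}}\left(\cdot\,\big|\,\frac{t}{T}w_T(\omega)\right)$ for the $X$-valued density of $Q$ with respect to $\mathbb{P}$, the measure takes the form $Q(A)=\int_A\Phi_Q\,d\mathbb{P}$, exactly as $N$ was built from $\mathbb{E}^{\mathbb{P}}(\cdot\,|\,w_T)$ in (\ref{due}). The decisive feature is that the time factor carried by the rescaled conditioning variable $\frac{t}{T}w_T$ is precisely what is needed to cancel the bridge factor $\frac{t}{T}$ that destroyed the martingale property for $N$.

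Next, fixing $s<t$ and a set $A\in\mathcal{F}_s$, I would apply Proposition \ref{stefansson} with $\phi=w_t$ to pass from the vector integral against $Q$ to a scalar integral against $\mathbb{P}$, obtaining $\int_A w_t\,dQ=\int_A\mathbb{E}^{\mathbb{P}}\left(w_t\,\big|\,\frac{t}{T}w_T\right)d\mathbb{P}$, in complete analogy with the chain of equalities established for $N$. Then, as in the previous proposition, the $Q$-conditional expectation is computed by the tower-type formula $\mathbb{E}^{Q}(w_t|\mathcal{F}_s)=\mathbb{E}^{\mathbb{P}}\left(\mathbb{E}^{\mathbb{P}}\left(w_t\,\big|\,\frac{t}{T}w_T\right)\big|\mathcal{F}_s\right)$.

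The crucial step is the evaluation of the inner conditional expectation. Using the Gaussian conditioning formula (\ref{conditionalexpectation}) together with the explicit bridge density recalled in Remark \ref{ConditionalDistribution}, I would argue that the scaling built into the conditioning variable compensates the factor $t/T$, so that $\mathbb{E}^{\mathbb{P}}\left(w_t\,\big|\,\frac{t}{T}w_T\right)=w_T$, in contrast with the identity $\mathbb{E}^{\mathbb{P}}(w_t|w_T)=\frac{t}{T}w_T$ that was used for $N$ and that produced $\frac{t}{T}w_s$. Once this identity is in place, the conclusion is immediate: since $w$ is a martingale in itself under $\mathbb{P}$ and $s<T$, one gets $\mathbb{E}^{Q}(w_t|\mathcal{F}_s)=\mathbb{E}^{\mathbb{P}}(w_T|\mathcal{F}_s)=w_s$, which is exactly the martingale identity for $w_t$ with respect to $Q$.

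The main obstacle I anticipate is precisely this inner computation and the care it demands. One must pin down the reading of the conditioning against the rescaled variable $\frac{t}{T}w_T$ so that the factor $t/T$ is absorbed into $w_T$ rather than left in place, and then verify $\mathbb{E}^{\mathbb{P}}(w_t\,|\,\frac{t}{T}w_T)=w_T$ rigorously from (\ref{conditionalexpectation}); this is the single point on which the whole statement turns, since any residual $t/T$ would return the non-martingale $\frac{t}{T}w_s$ of the previous proposition. A secondary, more routine point is to confirm that the Pettis and Bartle--Dunford--Schwartz formalism of Proposition \ref{stefansson} legitimately supports both the reduction $\int_A w_t\,dQ=\int_A\mathbb{E}^{\mathbb{P}}(w_t|\frac{t}{T}w_T)\,d\mathbb{P}$ and the passage to the $\mathcal{F}_s$-conditional expectation, exactly as was done for $N$.
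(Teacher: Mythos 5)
Your proposal is correct and follows essentially the same route as the paper: the tower property reduces the claim to $\mathbb{E}^{Q}\left(w_t|\mathcal{F}_s\right)=\mathbb{E}^{\mathbb{P}}\left(\mathbb{E}^{\mathbb{P}}\left(w_t\,|\,\tfrac{t}{T}w_T\right)|\mathcal{F}_s\right)$, the inner conditional expectation is evaluated as $w_T$ (the paper asserts that $\left(w_t\,|\,\tfrac{t}{T}w_T\right)$ is normal with parameters $\left(w_T,\left(1-\tfrac{t}{T}\right)t\right)$), and then $\mathbb{E}^{\mathbb{P}}\left(w_T|\mathcal{F}_s\right)=w_s$ concludes. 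The one step you flag as delicate, namely reading the conditioning on the rescaled variable so that the factor $t/T$ is absorbed and $\mathbb{E}^{\mathbb{P}}\left(w_t\,|\,\tfrac{t}{T}w_T\right)=w_T$, is exactly the step the paper states without further verification, so your attempt contains everything the published proof does.
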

\begin{proof}
We can observe that, for every $s\leq t\in [0,T]$, we have that
$$\mathbb{E}^{Q}\left(w_t|\mathcal{F}_s\right)=\mathbb{E}^{\mathbb{P}}\left(\mathbb{E}^{\mathbb{P}}\left(w_t \,|\dfrac{t}{T}w_T\right)|\mathcal{F}_s\right)=\mathbb{E}^{\mathbb{P}}\left(w_T|\mathcal{F}_s\right)=w_s,$$
where we have used the fact that $\left(w_t | \dfrac{t}{T}w_T\right)$ follows a normal distribution of parameters $\left(w_T, \left(1-\dfrac{t}{T}\right)t\right)$.
Then the Brownian motion is a martingale under the vector measure $Q$.
\end{proof}

\noindent {\bf Acknowledgment}.
This research was partially supported by Grant  “Analisi reale, teoria della misura ed approssimazione per la ricostruzione di
im\-ma\-gini”
 (2020) of GNAMPA -- INDAM (Italy) and  by University of Perugia  --  Fondo Ricerca di Base 2017.


\begin{thebibliography}{99}
\small

\bibitem{BS1} 
 E. J. Balder and A.R. Sambucini  {\it A note on strong convergence for Pettis integrable function}, Vietnam J.  Math.  {\bf 31},  N. 3 (2003), 341-347.

\bibitem{bc}
J.K.  Brooks, D. Candeloro,    {\em Weak stochastic integration in Banach spaces}.
 Atti Sem. Mat. Fis. Univ. Modena  {\bf 49} (2),  (2001),  513--522.

\bibitem{candrn} 
D. Candeloro, A. Croitoru, A. Gavrilut, A.R. Sambucini, \emph{A multivalued version of the Radon-Nikodym theorem, via the single-valued Gould integral}, Australian Journal of Mathematical Analysis and Applications, {\bf 15} (2), art. 9 pp 1-16 (2018). 

\bibitem{candeloro0} 
D. Candeloro, L. Di Piazza, K. Musial, A.R. Sambucini,  \emph{Gauge integrals and selections of weakly compact valued multifunctions}, J.M.A.A  {\bf 441} (1),  (2016),  293-308, Doi: 10.1016/j.jmaa.2016.04.009.

\bibitem{candeloro1}
D. Candeloro, L. Di Piazza, K. Musial, A.R. Sambucini, \emph{Relations among gauge and Pettis integrals for multifunctions with weakly compact convex values}, Annali di Matematica,  {\bf 197} (1) , (2018), 171-183, Doi: 10.1007/s10231-017-0674-z.

\bibitem{brow}
D. Candeloro, C.C.A. Labuschagne, V. Marraffa, A.R. Sambucini, \emph{Set-valued Brownian motion}, Ricerche di Matematica, vol 67 (2), (2018), 347-360. Doi: 10.1007/s11587-018-0372-1.

\bibitem{girsanov}
 D. Candeloro, A. R. Sambucini, \emph{A Girsanov Result Through Birkhoff Integral},  
ICCSA 2018, LNCS 10960, Doi:10.1007/978-3-319-95162-1\_47 (2018)

\bibitem{cst1}
 D. Candeloro, A. R. Sambucini, L. Trastulli, \emph{A vector Girsanov result and its applications to conditional measures via the Birkhoff integrability}, Mediterr. J. Math. (2019) 16:144, https://doi.org/10.1007/s00009-019-1431-x
 
 \bibitem{ckr1} 
  C. Cascales, V. Kadets and J. Rodríguez, \textit{The Pettis integral for multi-valued functions via single-valued ones}, J. Math. Anal. Appl. {\bf  332}, (1)  (2007),   1--10.
  
\bibitem{ckr} 
 C. Cascales, V. Kadets and J. Rodríguez, \textit{Measurable selectors and set-valued Pettis integral in non-separable Banach spaces}, J. Functional Analysis {\bf 256} (2009), 673-699.


\bibitem{cc}
 K. Cicho\'n, M. Cicho\'n,   \textit{Some Applications of Nonabsolute Integrals in the Theory of Differential Inclusions in Banach Spaces}, G.P. Curbera,G. Mockenhaupt, W.J. Ricker (Eds.), Vector Measures, Integration and Related Topics, in: Operator Theory: Advances and Applications,  vol. 201, BirHauser-Verlag, ISBN: 978-3-0346-0210-5 (2010), 115--124.


\bibitem{anca} 
A. Croitoru, C. Stamane, \emph{The general Pettis–Sugeno integral of vector multifunctions relative to a vector fuzzy multimeasure}, 
Fuzzy Sets and Systems {\bf 327},   Doi: 10.1016/j.fss.2017.07.007

\bibitem{dma}
 L. Di Piazza, V. Marraffa, \textit{Pettis integrability of fuzzy mappings with values in arbitrary Banach spaces}, 
Mathematica Slovaca {\bf 67} (6), (2017)
Doi: 10.1515/ms-2017-0057

\bibitem{dmas}
 L. Di Piazza, V. Marraffa, B. Satco, \textit{Approximating the solutions of differential inclusions driven by measures}
Annali di Matematica Pura ed Applicata, {\bf 198} (6), (2019),  2123--2140.


\bibitem{dm}
 L. Di Piazza, K. Musia{\l}, \textit{Relations among Henstock, McShane and Pettis integrals for multifunctions with compact convex values}, Monatsh. Math. {\bf 173} (4),  (2014),  459--470.
 
 \bibitem{dm-new}
 L. Di Piazza, K. Musia{\l}, \textit{Decompositions of Weakly Compact Valued Integrable Multifunctions},  Mathematics, {\bf 8} (6), (2020), 863; doi: 10.3390/math8060863. 

\bibitem{dp}
 L. Di Piazza, D. Preiss,  \textit{When do McShane and Pettis integrals coincide?},    Illinois J. Math. {\bf 47} (4), (2003),  1177--1187, ISSN: 0019-2082.

\bibitem{dps1}
 L. Di Piazza, B. Satco, \textit{A new result on impulsive differential equations involving non-absolutely
convergent integrals},  J. of Math. Anal.  Appl. \textbf{352}  (2009), 954--963.

\bibitem{FMN}
A. Fernandez, F. Mayoral, F. Naranjo, {\em Bartle–Dunford–Schwartz Integral versus
Bochner, Pettis and Dunford Integrals}, Journal of Convex Analysis, {\bf 20} (2), (2013), 339-353.

\bibitem{Fremlin0} 
D. H. Fremlin, \emph{Integration  of  vector-valued  functions}, Atti Sem. Mat. Fis. Univ. Modena, {\bf 42},  205-211,(1994)


\bibitem{ex10} G. Grimmett, D. Stirzaker, {\em Probability and Random Processes}, Oxford University Press Inc. New York, 2001.

\bibitem{GL} J.J. Grobler, C.C.A. Labuschagne, {\em
Girsanov’s theorem in vector lattices}, Positivity {\bf 23}, (2019), 1065--1099.

\bibitem{KLW}
W.C. Kuo, C.C.A. Labuschagne, B.A. Watson, {\em Conditional expectations on Riesz spaces}, J. Math. Anal. Appl.,{\bf 303}, (2005), 509-521.
\bibitem{LM1}
C.C.A. Labuschagne,  V. Marraffa, {\em Operator martingale decomposition and the Radon-Nikodym property in Banach spaces}, J. Math. Anl. Appl., {\bf 363} (2), (2010), 357--365.


\bibitem{mar1}
 V. Marraffa, \emph{Stochastic processes of vector valued Pettis and McShane integrable functions}, Folia Mathematica, 
{\bf  12} (1),   25-37.
 
\bibitem{mmodena}
K. Musia\l, {\em  A Radon-Nikodym theorem for the Bartle-Dunford-Schwartz Integral}, Atti Sem. Mat. Fis. Univ. Modena , {\bf XLI}, (1993)  227-233. 

\bibitem{mikosh}
 T. Mikosch, \emph{Elementary stochastic calculus (with finance in view)}, World Scientific Publishing Co. Pte. Ltd. (1998)

\bibitem{musial}
 K. Musia\l, \emph{Martingales of Pettis Integrable Functions},  Lecture Notes in Mathematics -Springer-Verlag- 794:324-339,  (1980).

\bibitem{mu}
 K. Musia\l,  \emph{ Topics in the theory of Pettis integration},  Rend. Istit. Mat. Univ. Trieste,  {\bf 23}, (1991) 177--262.

\bibitem{mu2011}
 K.  Musia\l, \emph{Pettis Integrability of Multifunctions
with Values in Arbitrary Banach Spaces},
  J. Convex Analysis,  {\bf 18} (3), (2011),  769--810.


\bibitem{novikov}
  A. Novikov, \emph{A certain identity for stochastic integrals}, Theory of Probability \& Its Applications, {\bf 17} (4), (1972),  717–720.

\bibitem{pascucci}
 A. Pascucci, \emph{Calcolo stocastico per la finanza}, Springer-Verlag Italia, (2008)


\bibitem{pettis}
 B.J. Pettis, \emph{On the integration in vector spaces}, Trans. Amer. Math. Soc.  {\bf 44}, (1938),  277-304.

\bibitem{RvG}
 M. Riedlea, O. van Gaansb, \emph{Stochastic integration for Lévy processes with values in Banach spaces}, Stochastic Processes and their Applications {\bf 119}, (2009), 1952-1974.

\bibitem{rodriguez1}
J. Rodríguez, \emph{Absolutely summing operators and integration of vector-valued functions}, J. Math. Anal. Appl., {\bf 316} (2), (2006), 579–600.

\bibitem{stef}
G. F. Stefansson {\em $L^1$ of a vector measure}, Le Matematiche, {\bf XLVIII} (2), (1993), 219-234.

\bibitem{T84}
 M. Talagrand, {\em Pettis integral and measure theory}.
 Mem. Amer. Math. Soc. {\bf 307}, Am. Math. Soc., Providence, R. I. (1984).
 
\bibitem{neerven}
 J.M.A.M. Van Neerven, L. Weis, \emph{Stochastic Integration of function with values in a Banach space}, Studia Mathematica, {\bf 166} (2), (2005),  131-170.

 \end{thebibliography}
 \end{document}